\newcommand{\R}{{\mathbb R}}
\newcommand{\tu}{\tilde{u}}
\renewcommand{\d }{\delta }
\renewcommand{\O}{\Omega}
\numberwithin{equation}{section}
\newtheorem{theorem}{Theorem}[section]
\newtheorem{proposition}[theorem]{Proposition}
\newtheorem{lemma}[theorem]{Lemma}
\newtheorem{remark}[theorem]{Remark}
\theoremstyle{definition}
\newcommand{\brm}{\begin{remark}\rm}
\newcommand{\erm}{\end{remark}}
\newcommand{\brms}{\begin{remark}\rm}
\newcommand{\erms}{\end{remark}}
\newcommand{\bte}{\begin{theorem}}
\newcommand{\ete}{\end{theorem}}
\newcommand{\bpr}{\begin{proposition}}
\newcommand{\epr}{\end{proposition}}
\newcommand{\ble}{\begin{lemma}}
\newcommand{\ele}{\end{lemma}}
\newcommand{\beq}{\begin{equation}}
\newcommand{\eeq}{\end{equation}}
\newcommand{\bdm}{\begin{displaymath}}
\newcommand{\edm}{\end{displaymath}}
\numberwithin{equation}{section}
\newcommand{\bos}{\begin{remark}\rm}
\newcommand{\eos}{\end{remark}}
\newcommand{\ben}{\begin{enumerate}}
\newcommand{\een}{\end{enumerate}}
\renewcommand{\d}{\delta }
\newcommand{\D }{\Delta }
\newcommand{\e }{\varepsilon }
\newcommand{\n }{\nabla }
\renewcommand{\O }{\Omega }
\newcommand{\vp }{\varphi }
\newcommand{\ov}{\overline}
\newcommand{\pa}{\partial}
\newcommand{\wt }{\tilde}
\newcommand{\be}{\begin{equation}}
\newcommand{\ee}{\end{equation}}
\title[Symmetry results]{Symmetry results for  $p$-Laplacian  systems involving a first order term}
\author[F.\ Esposito]{Francesco Esposito}
\thanks{F. Esposito and L. Montoro were partially supported by PRIN project  2017JPCAPN (Italy): {\em Qualitative and quantitative aspects of nonlinear PDEs} and also by Gruppo Nazionale per l'Analisi Matematica, la Probabilit\`a e le loro Applicazioni (GNAMPA) of the Istituto Nazionale di Alta Matematica (INdAM).
S. Merch\'an and L. Montoro were  partially supported by project MTM2016-80474-P, MINECO (Spain): {\em Problemas elipticos y parabolicos basados en potencias del Laplaciano}}
\address{Dipartimento di Matematica e Informatica
\newline\indent
Universit\`a della Calabria
\newline\indent
Ponte Pietro Bucci 31B, I-87036 Arcavacata di Rende, Cosenza, Italy}
\email{esposito@mat.unical.it}
\author[S.\ Merch\'an]{Susana Merch\'an}
\address{Departamento de Matem\'aticas 
\newline\indent
Escuela de Caminos, Canales y Puertos, Universidad Polit\'ecnica de Madrid
\newline\indent
Profesor Aranguren, 3, 28040 Madrid, Spain}
\email{susana.merchan@upm.es}
\author[L.\ Montoro]{Luigi Montoro}
\address{Dipartimento di Matematica e Informatica
\newline\indent
Universit\`a della Calabria
\newline\indent
Ponte Pietro Bucci 31B, I-87036 Arcavacata di Rende, Cosenza, Italy}
\email{montoro@mat.unical.it}
\thanks{\it 2010 Mathematics Subject
 Classification: 35J47, 35J62,	35J92}
\begin{document}
\begin{abstract}
In this paper we obtain symmetry and monotonicity results for positive solutions to some $p$-Laplacian cooperative  systems in bounded domains involving  first order terms and  under zero Dirichlet boundary condition. 
\end{abstract}

\maketitle

\medskip

\section{Introduction}\label{introdue}

The aim of this work is to get some symmetry and monotonicity results for nontrivial  solutions  $(u_1, u_2,\ldots,u_m)\in C^{1}(\ov{\O})\times C^{1}(\ov{\O})\ldots\times C^{1}(\ov{\O})$
to the following quasilinear elliptic system
\begin{equation}
\label{NVsystem}\tag{$\mathcal{S}$}
\begin{cases}
-\D_{p_i} u_i+a_i(u_i)|\nabla u_i|^{q_i}= f_i(u_1,u_2,\ldots,u_i,\ldots, u_m) & \text{in $\O$}\\
u_i>0  & \text{in $\O$}    \\
u_i=0 & \text{on $\pa\O$},  
\end{cases}
\end{equation}
where $i=1,\ldots,m$,  $p_i>1$, $q_i=\max\{1,{p_i}-1\}$,  $\Omega$ is a smooth bounded domain (connected open set) of $\R^N$, $N\geq 2$,  $\D_{p_i} u_i:={\rm div}(|\nabla u_i|^{p_i-2}\nabla u_i)$ is the $p$-Laplace operator and $a_i, f_i$  are  problem data that obey to the  set of assumptions $(hp^*)$ below.  The solution $(u_1, u_2,\ldots,u_m)$ has to be understood in the weak distributional meaning.
Our result will be obtained by means of the moving plane method, which goes back to the papers of Alexandrov \cite{A} and Serrin \cite{serrin}. In this work we use a nice variant of this technique: in particular the one of the celebrated papers of Berestycki-Nirenberg \cite{BN} and Gidas-Ni-Nirenberg \cite{GNN}, where the authors used, as essential ingredient, the maximum principle by comparing the values of the solution of the equation at two different points after a suitable reflection. Such a technique can be performed in general  convex domains providing partial monotonicity results near the boundary and symmetry properties when the domain is convex and symmetric. For simplicity of exposition and without loss of generality, since the system \eqref{NVsystem} is invariant with respect to translations and rotations,  we assume directly in all the paper that $\Omega$ is a convex domain in the $x_1$-direction and symmetric with respect to the hyperplane $\{x_1=0\}$. When $m=1$ the system \eqref{NVsystem} is reduced to   a scalar equation, that was already studied in~\cite{FMRS} in the case of $\Omega = \R^N_+$ and $1<p<2$.

\noindent The moving plane procedure was applied  to investigate symmetry properties of solutions  of cooperative semilinear elliptic systems in bounded domains, firstly by Troy \cite{troy} (see also \cite{defig1,defig2,RZ}): in this paper, the author considers the case $p_i=2$ and $a_i=0$  of~\eqref{NVsystem}.  This technique is very powerful and was adapted also in the case of cooperative semilinear systems in the half-space $\R^N_+$ by Dancer \cite{Dan} and in the entire space $\R^N$ by Busca and Sirakov~\cite{busca}. For other results regarding semilinear elliptic systems in bounded or unbounded domains, involving also critical nonlinearities, we refer to \cite{esposito}. 

The moving plane method for quasilinear elliptic equations in bounded domains was developed in several papers by Damascelli, Pacella and Sciunzi \cite{Da2,DP,DS1} and in \cite{EMS,MMPS}  for quasilinear elliptic equations involving  the   Hardy-Leray  potential  and other more general singular nonlinearities. For the case of quasilinear elliptic systems in bounded domains we refer to \cite{mrs, MSS}, where the authors considered the case $m=2$ and $a_1=a_2=0$  of  \eqref{NVsystem}.  Moreover, for other questions regarding existence, non existence and Liouville type results, in the case of (pure, i.e. $a_i=0$ in  \eqref{NVsystem}) $p$-Laplace systems,   we refer the readers to the papers (and references therein) \cite{ACM, CFMT, CMM, MP1, MP2}.

In this work we  consider the general case of $m$ $p$-Laplace equations with first order terms. 
 
To deal with the study of the qualitative properties of  solutions to \eqref{NVsystem}, first we point out  some regularity properties of the solutions to \eqref{NVsystem}, see Section \ref{prel}. Indeed the fact that solutions to $p$-Laplace equations are not in general $C^2(\Omega)$, leads to the study of the summability properties
of the second derivatives of the solutions.  Thanks to these regularity results, we are able  to prove a  weak comparison principle in small domains, i.e. Proposition \ref{pro:confr},  that is a first crucial step in the proof of the main result of the paper, namely Theorem \ref{main1} below. Moreover  we also get  some comparison and maximum principles that we will exploit in the proof of Theorem \ref{main1}. 

\

Through all the paper,  we  assume that  the following hypotheses (denoted by $({hp^*})$  in the sequel) hold:

\
\begin{itemize}
\item [$({hp^*})$]
\begin{itemize}
\item [$(i)$] For any $1\leq i\leq m$, $a_i: \mathbb{R}\to \mathbb R$ are locally Lipschitz continuous functions.
\\
\item [$(ii)$] For any $1\leq i\leq m$, $f_i: \overline{\mathbb{R}}_+^m\to \mathbb R$ are  locally $\mathcal{C}^1$ functions, i.e. $f_i\in C^1_{\rm loc}(\overline{\mathbb R}^m_+)$, and assume that  
\[f_i(t_1,t_2,...,t_m)>0,\] for all $t_i>0$. Moreover the functions $f_i$ satisfy 
\begin{equation} \label{coop}
\frac{\partial f_i}{\partial t_k}(t_1,t_2,...,t_m)\geq 0 \quad \text{for}\quad k\neq i, \,\,1\leq i,k\leq m.
\end{equation}
\end{itemize}
\end{itemize}
The  monotonicity conditions  \eqref{coop} are also  known as {\em cooperativity conditions}, see \cite{Dan, MSS, RZ, troy}.

\

\noindent Finally we have the following
\begin{theorem}
    \label{main1}
Assume that hypotheses $(hp^*)$ hold.
If $\Omega$ is convex in the  $x_1$-direction and  symmetric with
respect to the hyperplane $T_{0}=\{x \in \R^N: x_1=0 \}$,
then any solution  $(u_1, u_2,\ldots,u_m)\in C^{1}(\ov{\O})\times C^{1}(\ov{\O})\ldots\times C^{1}(\ov{\O})$  to \eqref{NVsystem} is symmetric with respect to the hyperplane $T_{0}$  and nondecreasing in the $x_1$-direction in  the  set $\Omega _0=\{x_1  <0\}$, namely
\[u_i(x_1,x_2,\cdots,x_N)=u_i(-x_1,x_2,\cdots,x_N)\quad\text{in $\Omega$}  \]
and
\begin{equation}\label{eq:akdkjsjkaslarea}
\frac {\partial  u_i}{\partial x_1}(x) \geq 0\quad\text{in $\Omega  _0$},
\end{equation}
for every $i\in \{1,\cdots,m\}.$ In particular, if $\Omega  $ is a ball, then $u_i$ are radially symmetric and radially decreasing, i.e.
\[\frac {\partial  u_i}{\partial  r}(r)<0 \quad\text{for $r\neq0$}.\] 
Moreover, if $p_i>(2N+2)/(N+2)$ for every $i\in \{1,\cdots,m\}$, then  we have 
\begin{equation}\label{eq:derivatapositiva}
\frac {\partial  u_i}{\partial x_1}(x) > 0\quad\text{in $\Omega  _0$},
\end{equation}
for every $i\in \{1,\cdots,m\}.$
\end{theorem}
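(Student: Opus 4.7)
The plan is to apply the moving-plane method in the $x_1$-direction for the entire system simultaneously. Let $a := \inf_{x \in \Omega} x_1 < 0$, and for $\lambda \in (a,0]$ put
\[
\Omega_\lambda := \Omega \cap \{x_1 < \lambda\}, \qquad x^\lambda := (2\lambda - x_1, x_2, \ldots, x_N), \qquad u_{i,\lambda}(x) := u_i(x^\lambda).
\]
Since $\Delta_{p_i}$ and all nonlinearities are reflection invariant, each $u_{i,\lambda}$ solves the same equation as $u_i$ on the reflected cap. The aim is to establish
\[
(\mathcal{I}_\lambda) \qquad u_i(x) \le u_{i,\lambda}(x) \quad \text{for all } x \in \Omega_\lambda \text{ and every } i \in \{1,\ldots,m\}.
\]
For $\lambda$ close to $a$ the cap $\Omega_\lambda$ has arbitrarily small Lebesgue measure. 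On $\partial\Omega_\lambda \cap \partial\Omega$ one has $u_i = 0 < u_{i,\lambda}$ while on $T_\lambda \cap \overline{\Omega}$ the two functions agree, so the weak comparison principle in small domains (Proposition \ref{pro:confr}) applies to the full system: cooperativity \eqref{coop} handles the coupling terms, the Lipschitz regularity of $a_i$ controls the first-order nonlinearity, and the $C^1$ regularity of $f_i$ controls the reaction part. This yields $(\mathcal{I}_\lambda)$ simultaneously for all $i$.

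Next I would run the continuation step. Define
\[
\bar\lambda := \sup\bigl\{\lambda \in (a,0] : (\mathcal{I}_\mu) \text{ holds for every } \mu \in (a,\lambda]\bigr\},
\]
which satisfies $\bar\lambda > a$ by the previous step, and $(\mathcal{I}_{\bar\lambda})$ holds by continuity. I would argue by contradiction that $\bar\lambda = 0$. Assuming $\bar\lambda < 0$, a strong comparison principle applied component-by-component, together with cooperativity and the strict inequality $u_i = 0 < u_{i,\bar\lambda}$ on $\partial\Omega \cap \partial\Omega_{\bar\lambda}$, upgrades $(\mathcal{I}_{\bar\lambda})$ to strict inequality $u_i < u_{i,\bar\lambda}$ in the interior of every connected component of $\Omega_{\bar\lambda}$. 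Choose a compact $K \Subset \Omega_{\bar\lambda}$ with $|\Omega_{\bar\lambda}\setminus K|$ smaller than half the smallness threshold of Proposition \ref{pro:confr}. By compactness and continuity, $u_{i,\lambda} - u_i$ stays bounded below by a positive constant on $K$ for $\lambda > \bar\lambda$ slightly larger, while $|\Omega_\lambda \setminus K|$ stays below the threshold; Proposition \ref{pro:confr} applied on $\Omega_\lambda \setminus K$, with the good boundary datum inherited from $K$, extends $(\mathcal{I}_\lambda)$ to the whole cap for such $\lambda$, contradicting the maximality of $\bar\lambda$.

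Running the same argument from the opposite side (replacing $x_1$ by $-x_1$) gives the reverse inequality $u_i(x) \le u_i(x^0)$ in $\{x_1 > 0\}$, which combined with $(\mathcal{I}_0)$ yields symmetry of each $u_i$ across $T_0$. The monotonicity \eqref{eq:akdkjsjkaslarea} is a direct consequence of $(\mathcal{I}_\lambda)$ holding for every $\lambda \in (a,0)$: for $x \in \Omega_0$ and small $t > 0$, the choice $\lambda = x_1 + t/2$ gives $u_i(x_1, x') \le u_i(x_1+t, x')$. The radial case follows by applying the argument in every direction, combined with a strong-maximum-principle argument along rays to obtain strict decrease in $r$. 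Finally, if $p_i > (2N+2)/(N+2)$, the summability of the second derivatives of $u_i$ established in Section \ref{prel} allows $u_{i,\lambda} - u_i$ to be treated as a solution of a linear equation in the weighted Sobolev space with weight $|\nabla u_i|^{p_i-2}$; a V\'azquez-type strong maximum principle then upgrades $(\mathcal{I}_\lambda)$ to strict inequality for every $\lambda \in (a,0)$ and delivers \eqref{eq:derivatapositiva}.

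The main obstacle is the first-order term $a_i(u_i)|\nabla u_i|^{q_i}$ with $q_i = \max\{1, p_i-1\}$ at the critical scaling of the $p$-Laplacian: subtracting the equations for $u_{i,\lambda}$ and $u_i$ produces an operator on the difference whose gradient perturbation cannot be treated as an honest lower-order term, and whose uniform control in terms of the measure of the cap is the technical heart of Proposition \ref{pro:confr}. Once that machinery is available, the remaining difficulty is purely book-keeping, provided one exploits cooperativity at each occurrence of $f_i(u_1,\ldots,u_m) - f_i(u_{1,\lambda},\ldots,u_{m,\lambda})$ so as to preserve the correct sign structure in the coupled differential inequalities throughout the iteration.
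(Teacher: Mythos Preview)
Your outline of the moving-plane scheme is sound and matches the paper's strategy up to the definition of $\bar\lambda$ and the use of Proposition~\ref{pro:confr} to start the procedure. The genuine gap is in the continuation step, and it concerns the critical sets $Z_{u_i}=\{\nabla u_i=0\}$, which your argument ignores entirely.

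You write that a strong comparison principle upgrades $(\mathcal I_{\bar\lambda})$ to $u_i<u_{i,\bar\lambda}$ throughout $\Omega_{\bar\lambda}$. But the strong comparison result available here (Theorem~\ref{SCP}(1)) requires $p_i>(2N+2)/(N+2)$; for smaller $p_i$ it holds only on connected components of $\Omega_{\bar\lambda}\setminus Z_{u_i}$ (Remark~\ref{rem:SCP}). On such a component that is compactly contained in $\Omega$, the boundary information $u_i=0<u_{i,\bar\lambda}$ is unavailable, so nothing in your argument rules out $u_i\equiv u_{i,\bar\lambda}$ there. The paper closes this gap with a nontrivial test-function computation (the ``{\sc Claim}''): testing the equation with $e^{-s_i(u_i)}G_\varepsilon(|\nabla u_i|)/|\nabla u_i|$ on $\mathcal C\cup\mathcal C^\lambda$, using the integrability estimate~\eqref{drdrd} on $|\nabla u_i|^{p_i-2}\|D^2u_i\|$, and letting $\varepsilon\to0$ to force a contradiction with $f_i>0$. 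Without this, the dichotomy does not resolve.

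Even once strict inequality is obtained on $\Omega_{\bar\lambda}\setminus Z_{u_i}$, your compactness step fails as written: you choose $K\Subset\Omega_{\bar\lambda}$ and assert $u_{i,\bar\lambda}-u_i>0$ on $K$, but at points of $K\cap Z_{u_i}$ you only know $u_i\le u_{i,\bar\lambda}$. The paper fixes this by covering $Z_{u_i}\cap\Omega_{\bar\lambda}$ (which has measure zero by~\eqref{drdrdbisssetebissete}) with an open set $A$ of small measure, working on the compact $K\setminus A$, and applying Proposition~\ref{pro:confr} on $\Omega_\lambda\setminus(K\setminus A)$, whose measure is still controlled. Your identification of the first-order term as ``the main obstacle'' is therefore off the mark: once Proposition~\ref{pro:confr} is in hand, the real work in the continuation step is the analysis near $Z_{u_i}$, which requires Hopf's lemma (to keep $Z_{u_i}$ away from $\partial\Omega$ and make $\Omega_{\bar\lambda}\setminus Z_{u_i}$ connected), the measure-zero information on $Z_{u_i}$, and the Claim above.
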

\noindent The paper is organized as follows: In Section \ref{prel} we recall some preliminary results and we prove Proposition \ref{pro:confr}. The proof of the  Theorem \ref{main1} is contained  in Section \ref{sec:main}.
\section {Preliminaries} \label{prel}
\noindent In this section we are going to give  some results  for  $p$-Laplace equations involving a first order term. Through all the paper, generic fixed and numerical constants will be denoted by
$C$ (with subscript or superscript in some case) and it will be allowed to vary within a single line or formula. Moreover,  by $\mathcal{L}(\Omega)$ we will denote the Lebesgue measure of a measurable set $\Omega$.

Firstly, we recall the following inequalities (see, for example, \cite{Da2}) that we are going to use along the paper:

For all $\mu,\mu'\in\mathbb{R}^N$ with $|\mu|+|\mu'|>0$ there exist two positive constants $C,\bar{C}$ depending on $p$ such that
\begin{equation}\label{tipical}
\begin{split}
[|\mu|^{p-2}\mu-|\mu'|^{p-2}\mu'][\mu-\mu'] &\geq C(|\mu|+|\mu'|)^{p-2}|\mu-\mu'|^2,\\
||\mu|^{p-2}\mu-|\mu'|^{p-2}\mu'|&\leq\bar{C}(|\mu|+|\mu'|)^{p-2}|\mu-\mu'|.
\end{split}
\end{equation}
In the following two theorems we give some regularity results and comparison/maximum principles for the solutions to \eqref{NVsystem}.  

\begin{theorem}[See \cite{MMS,M}]\label{local1}
Let $\Omega$  a  bounded smooth  domain of $\R^N$,  $N\geq 2$, $1<p<\infty$, $q\geq \max \{p-1,1\}$ and consider $u\in C^{1}( \Omega)$ a positive weak solution to 
$$-\Delta_p u+a(u)|\nabla u|^q=f(x, u) \quad in \quad\Omega,$$
 with 
 \begin{itemize}
\item [$(i)$] $a: \mathbb{R}\to \mathbb R$ a locally Lipschitz continuous function;
\item [$(ii)$] $f\in C^1( \overline\Omega \times [0,+\infty))$.
\end{itemize}
Denoting $u_{x_i}={\partial u}/{\partial x_i}$ and {setting $\nabla u_{x_i}=0$ on $Z_u$}, for any $\Omega'\subset\Omega''\subset\subset\Omega$, we have
\begin{equation}\label{drdrd}
\begin{split}
&\int_{\Omega'} \frac{|\nabla u|^{p-2-\beta}|\nabla u_{x_i}|^2}{|x-y|^\gamma}\,dx\leqslant \mathcal{C}\qquad \forall \, i=1,\ldots,N,\\
\end{split}
\end{equation}
uniformly for any $y\in\Omega'$,
with
\[
\mathcal{C}:\,=
\mathcal{C}\Big(a, f,p, q,\beta, \gamma, \|u\|_{L^\infty(\Omega'')},\|\nabla u\|_{L^\infty(\Omega'')}\Big),
\]
for any $0\leqslant \beta <1$ and $\gamma <(N-2)$ if $N\geq3$, or $\gamma =0$ if $N=2$.\\

\noindent Moreover, if $f(x,\cdot)$ is positive in $\Omega''$, then it follows that
\begin{equation}\label{drdrdbisssetebissete}
\begin{split}
&\int_{\Omega'} \frac{1}{|\nabla u|^{r(p-1)}}\frac{1}{|x-y|^\gamma}dx\leqslant {\mathcal{C}}^*,\\
\end{split}
\end{equation}
uniformly for any $y\in\Omega'$,
with
\[
\mathcal{C^*}:\,=
\mathcal{C^*}\Big(a, f,p, q,r, \gamma, \|u\|_{L^\infty(\Omega'')},\|\nabla u\|_{L^\infty(\Omega'')}\Big),
\]
for any  $ r<1$ and $\gamma <(N-2)$ if $N\geq3$, or $\gamma =0$ if $N=2$.
\end{theorem}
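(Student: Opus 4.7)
The strategy is the Damascelli--Sciunzi regularization-and-weighted-Caccioppoli scheme, adapted to accommodate the first-order perturbation $a(u)|\nabla u|^q$ as in the references quoted for Theorem \ref{local1}. Estimate \eqref{drdrd} is a weighted Caccioppoli-type bound on $\nabla u_{x_i}$ obtained by differentiating the equation in $x_i$, while \eqref{drdrdbisssetebissete} is then deduced from \eqref{drdrd} together with the positivity of $f$ by testing the original equation itself. As a preliminary step, for $\epsilon>0$ I would introduce the regularized solution $u_\epsilon$ of the uniformly elliptic problem
\[ -\mathrm{div}\bigl((\epsilon + |\nabla u_\epsilon|^2)^{(p-2)/2}\nabla u_\epsilon\bigr) + a(u_\epsilon)(\epsilon + |\nabla u_\epsilon|^2)^{q/2} = f(x,u_\epsilon) \]
with the same Dirichlet data as $u$. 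Standard Schauder theory yields $u_\epsilon \in C^{2,\alpha}_{\mathrm{loc}}$ with $u_\epsilon \to u$ in $C^{1}_{\mathrm{loc}}$ and bounds depending only on $\|u\|_{C^1(\Omega'')}$, so differentiation in $x_i$ is rigorous and produces a linear equation for $v_\epsilon := u_{\epsilon,x_i}$.

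To establish \eqref{drdrd} I would test the linearized equation with
\[ \varphi = v_\epsilon\, \psi^2\, (\epsilon + |\nabla u_\epsilon|^2)^{-\beta/2}\, |x-y|^{-\gamma}, \]
where $\psi \in C_c^\infty(\Omega'')$ is a cut-off with $\psi \equiv 1$ on $\Omega'$. After integration by parts the leading coercive term reproduces exactly the integrand of \eqref{drdrd}, with ellipticity extracted from \eqref{tipical}. The bad terms come from derivatives falling on $\psi$ (harmless by the cut-off), on $(\epsilon + |\nabla u_\epsilon|^2)^{-\beta/2}$ (absorbed via Young's inequality thanks to the deficit $\beta < 1$), and on $|x-y|^{-\gamma}$ (producing the factor $|x-y|^{-\gamma-1}$, locally integrable precisely because $\gamma < N-2$, trivially when $N=2$ and $\gamma=0$). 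The genuinely new contribution is $[a(u_\epsilon)(\epsilon + |\nabla u_\epsilon|^2)^{q/2}]_{x_i}$, whose second-order summand $q\, a(u_\epsilon)(\epsilon + |\nabla u_\epsilon|^2)^{(q-2)/2}\sum_j u_{\epsilon,x_j} u_{\epsilon,x_j x_i}$ has the same order as the principal part when $q = p-1$. Since $q \geq \max\{1,p-1\}$, however, the factor $(\epsilon + |\nabla u_\epsilon|^2)^{(q-1)/2}$ is bounded by $\|u\|_{C^1(\Omega'')}^{q-1}$, so this term splits as $|\nabla u_{\epsilon,x_i}|$ times a bounded weight and is absorbed into the coercive term by Young's inequality with a small parameter. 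Passing $\epsilon \to 0$ via Fatou (using the convention $\nabla u_{x_i} = 0$ on $Z_u$) yields \eqref{drdrd}.

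For \eqref{drdrdbisssetebissete}, the positivity of $f$ on $\Omega''$ together with $u \in C^1$ and $|a(u)||\nabla u|^q \leq C\|u\|_{C^1(\Omega'')}^q$ gives a lower bound $f(x,u) - a(u)|\nabla u|^q \geq c_0 > 0$ on $\{|\nabla u| \leq \delta\} \cap \Omega''$ for some $\delta > 0$ (on the complementary set $\{|\nabla u| > \delta\}$ the integrand of \eqref{drdrdbisssetebissete} is pointwise bounded, so one loses nothing). I would then test the original equation with $\varphi = \psi^2 (|\nabla u|^2 + \tau)^{-s}|x-y|^{-\gamma}$, $s = r(p-1)/2$, and let $\tau \searrow 0$. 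The right-hand side yields
\[ c_0 \int \psi^2 (|\nabla u|^2 + \tau)^{-s}|x-y|^{-\gamma}\,dx, \]
which is the target quantity once $\tau \to 0$ by monotone convergence. The left-hand side, after integration by parts, produces terms of the type $\psi^2 |\nabla u|^{p-1}(|\nabla u|^2 + \tau)^{-s-1}|D^2 u||x-y|^{-\gamma}$, estimated by Cauchy--Schwarz against \eqref{drdrd} with the appropriate $\beta$. Matching the powers imposes exactly the restriction $r < 1$.

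The principal obstacle is the absorption argument in the Caccioppoli step: in the critical regime $q = p-1$ the second-order term coming from the differentiation of $a(u)|\nabla u|^q$ matches the order of the principal part, and its absorption into the coercive term requires both the sharp form of Young's inequality and the strict inequality $\beta < 1$ to keep a positive coefficient. The remaining weighted integrals over $|x-y|^{-\gamma}$ with $\gamma < N-2$ are routine Hardy-type computations in polar coordinates, and the passage to the limit in $\epsilon$ (respectively $\tau$) is by now standard provided one keeps the constants independent of the regularization parameter.
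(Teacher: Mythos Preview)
Your outline is correct and follows precisely the Damascelli--Sciunzi regularization and weighted Caccioppoli scheme, adapted to the first-order term, that underlies the references \cite{MMS,M} cited in the statement. The paper itself does not give a proof here---it simply records that ``the proof follows exploiting and adapting some arguments contained in \cite{MMS,M}''---so your proposal in fact supplies the details that the paper omits, by the same method.
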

In particular, these regularity results apply to the solutions $u_i$ to \eqref{NVsystem} with 
\begin{equation}\label{eq:Bwathc}f(x,u_i)=f_i(u_1,u_2,\ldots,u_i,\ldots, u_m).
\end{equation}
\begin{proof}
The proof follows exploiting  and adapting  some arguments contained in \cite{MMS, M}  to  \eqref{eq:Bwathc}-type nonlinearities. This  would imply some technicalities which we rather avoid here.
\end{proof}
\noindent For $\rho \in L^{1}(\Omega)$ and $1\leq s<\infty$,  the weighted space $H^{1,s}_\rho(\Omega)$  (with respect to $\rho$)  is
defined as the completion of $C^1(\overline \Omega)$ (or $C^{\infty }(\overline \Omega)$) with the following norm
  \begin{equation}\label{hthInorI}
\| v\|_{H^{1,s}_\rho}= \| v\|_{L^s (\Omega)}+\| \nabla v\|_{L^s (\Omega, \rho)},
\end{equation}
 where
$$
\|\nabla v\|^s_{L^s (\Omega, \rho)}:=\int_{\Omega}\rho(x)|\nabla v(x)|^s  dx.
$$ 
The space $H^{1,s}_{0,\rho}(\Omega)$ is, consequently, defined as the closure of $C^1_c(\Omega)$ (or $C^{\infty }_c(\Omega)$), with respect to the norm \eqref{hthInorI}.  We refer to \cite{DS1} for more details about weighted Sobolev spaces and also to \cite[Chapter 1]{HKM} and the references therein.
Theorem \ref{local1} provides also the right summability of the weight $|\nabla u(x)|^{p-2}$ in order to obtain a weighted Poincar\'e-Sobolev  type inequality that will be useful in the sequel. For the proof we refer to \cite[Section 3]{DS1}.
\begin{theorem}[Weighted Poincar\'e-Sobolev type inequality]\label{bvbdvvbidvldjbvlb}
Assume that hypotheses $(hp^*)$ hold and let  $(u_1, u_2,\ldots,u_m)\in C^{1}(\ov{\O})\times C^{1}(\ov{\O})\ldots\times C^{1}(\ov{\O})$ be a solution to \eqref{NVsystem}. Assume that $p_i\geq 2$  for some  $i\in \{1,\ldots,m\}$ and set $\rho_i=|\nabla u_i|^{p_i-2}$. Then, for every $w\in H^{1,2}_{0}(\Omega,\rho_i)$, we have\begin{equation}\label{Sobolev}
\|w\|_{L^2(\Omega)}\leqslant C_{P}\|\nabla w\|_{L^2(\Omega, \rho_i)}=C_{P}\left(\int_{\Omega}\rho_i\, |\nabla w|^2\right)^{\frac{1}{2}},
\end{equation}
with $C_{P}=C_{P}(\Omega)  \rightarrow 0$ if $\mathcal{L}(\Omega)\rightarrow 0$.
\end{theorem}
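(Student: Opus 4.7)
The plan is to follow the strategy of Damascelli-Sciunzi \cite{DS1}, reducing the weighted estimate to the classical Poincar\'e-Sobolev inequality via H\"older's inequality, with the crucial ingredient being the integrability of a negative power of $|\nabla u_i|$ provided by the second estimate in Theorem \ref{local1}. Since $C^{1}_{c}(\Omega)$ is dense in $H^{1,2}_{0}(\Omega,\rho_i)$ by the very definition of the weighted Sobolev space, it suffices to prove \eqref{Sobolev} for $w\in C^{1}_{c}(\Omega)$ and extend by continuity in the weighted norm.

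For such $w$ and an auxiliary parameter $s\in(1,2)$ to be determined, I would apply H\"older's inequality with conjugate exponents $2/s$ and $2/(2-s)$, writing $|\nabla w|^{s}=(|\nabla w|^{2}\rho_i)^{s/2}\rho_i^{-s/2}$, to obtain
\begin{equation*}
\int_\Omega |\nabla w|^s \, dx \leq \left(\int_\Omega |\nabla w|^2 \rho_i \, dx\right)^{s/2} \left(\int_\Omega \rho_i^{-\frac{s}{2-s}} \, dx\right)^{\frac{2-s}{2}}.
\end{equation*}
Since $\rho_i=|\nabla u_i|^{p_i-2}$, the last factor is a power of $\int_\Omega |\nabla u_i|^{-s(p_i-2)/(2-s)}\,dx$, which by the second estimate in Theorem \ref{local1} (with $\gamma=0$) is finite provided $s(p_i-2)/(2-s)<p_i-1$, a condition equivalent to $s<2(p_i-1)/(2p_i-3)$; the latter upper bound is strictly greater than $1$, so such an $s$ can always be chosen.

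Combining this with the standard Sobolev embedding $\|w\|_{L^{s^*}(\Omega)}\leq C_S\|\nabla w\|_{L^s(\Omega)}$ for a suitable $s^*>2$ (taking $s^*=Ns/(N-s)$ when $N\geq 3$, and any $s^*\in(2,\infty)$ when $N=2$), and with H\"older's inequality $\|w\|_{L^2(\Omega)}\leq \mathcal{L}(\Omega)^{(s^*-2)/(2s^*)}\|w\|_{L^{s^*}(\Omega)}$, one obtains
\begin{equation*}
\|w\|_{L^2(\Omega)} \leq C\, \mathcal{L}(\Omega)^{\frac{s^*-2}{2 s^*}} \left(\int_\Omega |\nabla w|^2 \rho_i \, dx\right)^{1/2},
\end{equation*}
which is exactly \eqref{Sobolev}, with $C_P$ vanishing as $\mathcal{L}(\Omega)\to 0$ thanks to the explicit power of $\mathcal{L}(\Omega)$.

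The main obstacle is the simultaneous compatibility of the two constraints on $s$: the lower bound $s\geq 2N/(N+2)$ needed for $s^*\geq 2$ when $N\geq 3$, and the upper bound $s<2(p_i-1)/(2p_i-3)$ needed for the integrability of $\rho_i^{-s/(2-s)}$. The naive interval is empty when $p_i$ is large in high dimensions, and one must then refine the argument by exploiting the full strength of Theorem \ref{local1} (in particular the extra singular weight $|x-y|^{-\gamma}$ available in \eqref{drdrdbisssetebissete}) through a triple H\"older inequality or an iterated Sobolev embedding, as carried out in \cite[Section 3]{DS1}. The adaptation to our vectorial setting is immediate since only the regularity of the single component $u_i$ enters the argument, independently of the coupling terms $u_k$ for $k\neq i$.
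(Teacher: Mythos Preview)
Your proposal is correct and follows precisely the approach the paper defers to: the paper does not prove this theorem directly but simply refers to \cite[Section 3]{DS1}, noting that Theorem \ref{local1} supplies the required summability of the weight $\rho_i^{-1}$. Your sketch reproduces exactly the Damascelli--Sciunzi argument (H\"older splitting, integrability of $|\nabla u_i|^{-r(p_i-1)}$ from \eqref{drdrdbisssetebissete}, Sobolev embedding, and the refinement via the singular kernel $|x-y|^{-\gamma}$ when the naive exponent range collapses), so there is nothing to add.
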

\noindent The following theorem collects some comparison  and maximum principles for solutions to the system \eqref{NVsystem}. We have
\begin{theorem}[See \cite{MMS,M}]\label{SCP}
Let $\Omega$  a  bounded smooth  domain of $\R^N$,  $N\geq 2$, \begin{equation}\label{eq:p>trh}
p_i>\frac{(2N+2)}{(N+2)}
\end{equation} 
and $q_i\geq \max \{p_i-1,1\}$ for  $i=1,\ldots,m$. 
Let  $(u_1, u_2,\ldots,u_m), (v_1, v_2,\ldots,v_m)\in C^{1}(\ov{\O})\times C^{1}(\ov{\O})\ldots\times C^{1}(\ov{\O})$,  with  $(u_1, u_2,\ldots,u_m)$  a solution to~\eqref{NVsystem} and   
let us assume that assumptions  $(hp^*)$ hold. 
\begin{enumerate}
\item
Then for $i=1,2,\ldots,m$, any connected domain $\Omega'\subseteq \Omega$ and  for some constant $\Lambda >0$, such that
$$
-\D_{p_i} u_i+a_i(u_i)|\nabla u_i|^{q_i}+\Lambda u_i\leq -\D_{p_i} v_i+a_i(v_i)|\nabla v_i|^{q_i}+\Lambda v_i,\quad u_i\leq v_i \quad in\quad \Omega'$$
in the weak distributional meaning,  it follows that
$$u_i<v_i\quad in\quad\Omega',$$
unless $u_i\equiv v_i$ in $\Omega'$.
\item
For any $i=1,2,\ldots,m$, for any $j=1,2,\ldots,N$,  and for  any connected domain $\Omega'\subseteq\Omega$ such that  $$\frac{\partial u_i}{\partial x_j} \geq 0 \quad in\quad \Omega',$$ it follows that 
\[\frac{\partial u_i}{\partial x_j} >0 \quad in\quad\Omega', \qquad \text{unless}\qquad  \frac{\partial u_i}{\partial x_j}= 0 \quad  \text{in} \quad\Omega'.\]
\end{enumerate}
\end{theorem}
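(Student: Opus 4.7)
The plan is to treat the two items separately, each by reducing to a weighted linear elliptic inequality for the function whose positivity is to be established, and then invoking a weighted strong maximum principle in the spirit of Damascelli--Sciunzi \cite{DS1}. Throughout, the regularity package of Theorem \ref{local1} is what makes the linearization rigorous.

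\textbf{Part (1): strong comparison.} Fix $i$ and set $w := v_i - u_i \geq 0$ on $\Omega'$. Subtracting the two differential inequalities and applying the first elementary inequality in \eqref{tipical} to $\mu = \nabla v_i$ and $\mu' = \nabla u_i$, the principal part of the difference can be written as $\mathrm{div}(A_i(x)\nabla w)$ for a symmetric matrix $A_i(x)$ equivalent to $\rho_i(x)\,\mathrm{Id}$, with $\rho_i := (|\nabla u_i|+|\nabla v_i|)^{p_i-2}$. The difference of the first-order terms $a_i(v_i)|\nabla v_i|^{q_i} - a_i(u_i)|\nabla u_i|^{q_i}$ and the $\Lambda(v_i-u_i)$ contribution are absorbed into $b(x)\cdot\nabla w + c(x) w$ with $b, c \in L^\infty(\Omega')$, using the local Lipschitz continuity of $a_i$, the $C^1$ regularity of $u_i, v_i$, and $q_i \geq \max\{p_i-1,1\}$. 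Hence $w \geq 0$ satisfies, weakly in $\Omega'$,
\[
-\mathrm{div}(A_i(x)\nabla w) + b(x)\cdot\nabla w + c(x) w \geq 0.
\]
The weighted Sobolev inequality of Theorem \ref{bvbdvvbidvldjbvlb} and the reciprocal-weight integrability \eqref{drdrdbisssetebissete} (available precisely when $p_i > (2N+2)/(N+2)$) place this inequality in the functional-analytic setting where a weighted Harnack inequality holds for $w$. The standard consequence is the dichotomy $w\equiv 0$ or $w>0$ on the connected set $\Omega'$.

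\textbf{Part (2): strong maximum principle for $\partial_j u_i$.} Differentiation of the $i$-th equation with respect to $x_j$ is justified by the second-derivative integrability \eqref{drdrd} and shows that $z := \partial_j u_i$ is a weak solution in $\Omega'$ of a linear degenerate equation
\[
-\mathrm{div}(\tilde A_i(x)\nabla z) + \tilde b(x)\cdot \nabla z + \tilde c(x) z = g_i(x),
\]
where $\tilde A_i(x)$ is equivalent to $|\nabla u_i|^{p_i-2}\,\mathrm{Id}$, $\tilde b, \tilde c \in L^\infty(\Omega')$ (absorbing the $\partial_{t_i} f_i$ contribution into $\tilde c$), and $g_i(x) = \sum_{k\neq i}\frac{\partial f_i}{\partial t_k}(u)\,\partial_j u_k$. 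Since $z \geq 0$ on $\Omega'$, the same weighted Harnack machinery as in part (1), combined with the smallness of the critical set $Z_{u_i}$ guaranteed by \eqref{drdrdbisssetebissete}, shows that the zero set $\{z=0\}$ is both open and closed in $\Omega'$, whence $z \equiv 0$ or $z > 0$. When (2) is invoked in the moving-planes application to Theorem \ref{main1} one has in addition $\partial_j u_k \geq 0$ for all $k$, so cooperativity \eqref{coop} makes $g_i \geq 0$ and the argument reduces to a textbook application of the weighted strong maximum principle to a nonnegative supersolution.

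\textbf{Main obstacle.} The core difficulty in both parts is the degeneracy of the linearized operator on the critical set $Z_{u_i}\cup Z_{v_i}$, where the weight $|\nabla u_i|^{p_i-2}$ vanishes. Off this set the operator is uniformly elliptic and standard Moser/Harnack theory applies, but propagation of positivity (or of zeros) across the critical set is delicate. The sharp threshold $p_i > (2N+2)/(N+2)$ is exactly what makes $|\nabla u_i|^{-r}$ locally integrable with $r$ large enough to run the weighted Harnack iteration across $Z_{u_i}$, following the scheme of \cite{DS1}; the first-order term $a_i(u_i)|\nabla u_i|^{q_i}$ is absorbed harmlessly into the drift thanks to the restriction $q_i \geq \max\{p_i-1,1\}$.
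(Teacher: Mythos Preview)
Your proposal is correct and follows essentially the same strategy as the paper: for part (1) both you and the authors reduce to the weighted strong comparison principle of \cite{MMS} via the regularity in Theorem~\ref{local1}, and for part (2) both differentiate the $i$-th equation, use cooperativity to drop the cross terms $\sum_{k\neq i}\partial_{t_k}f_i\,\partial_j u_k$, and then apply the weighted Harnack inequality of \cite{M} together with an open--closed argument on $\{z=0\}$. Your explicit remark that dropping the cross terms requires $\partial_j u_k\geq 0$ for \emph{all} $k$ (available in the moving-plane application) is in fact sharper than the paper's presentation, which uses exactly this sign condition in deriving the scalar inequality for $\partial_{x_j}u_i$ without flagging it in the statement.
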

\begin{proof} 
The part $(1)$ of the statement, follows using  the regularity results contained in Theorem \ref{local1} and  then exploiting \cite[Theorem 1.2]{MMS}.
 
 To prove the part $(2)$ we need to define the linearized equations to the system \eqref{NVsystem}. In order to do this, since $(u_1, u_2,\ldots,u_m)\in C^{1}(\ov{\O})\times C^{1}(\ov{\O})\ldots\times C^{1}(\ov{\O})$ is a weak
solution of~\eqref{NVsystem}, then we set
\begin{eqnarray*}
&&L_{(u_1,\ldots,u_m)}\big ((\partial_{x_j} u_1,\ldots,\partial_{x_j} u_i,\ldots, \partial_{x_j} u_m), (\varphi_1,\ldots, \varphi_m)\big) \\
&&=\Big(L^1_{(u_1,\ldots,u_m)}\big ((\partial_{x_j} u_1,\ldots,\partial_{x_j} u_i,\ldots, \partial_{x_j} u_m), \varphi_1\big),\ldots,\\
&& L^i_{(u_1,\ldots,u_m)}\big ((\partial_{x_j} u_1,\ldots,\partial_{x_j} u_i,\ldots, \partial_{x_j} u_m), \varphi_i\big),
\ldots, \\&&
L^m_{(u_1,\ldots,u_m)}\big ((\partial_{x_j} u_1,\ldots,\partial_{x_j} u_i,\ldots, \partial_{x_j} u_m),  \varphi_m\big)    \Big),
\end{eqnarray*}
where for  $p_i>1$,
\begin{eqnarray*}&&L^i_{(u_1,\ldots,u_m)}\big ((\partial_{x_j} u_1,\ldots,\partial_{x_j} u_i,\ldots, \partial_{x_j} u_m), \varphi_i\big) 
\\
&&= \int_{\Omega}|\nabla u_i|^{p_i-2}(\nabla \partial_{x_j}u_i,\nabla \varphi_i)
 +(p_i-2)\int_{\Omega}|\nabla u_i|^{p_i-4}(\nabla u_i,\nabla \partial_{x_j}u_i)(\nabla u_i,\nabla
 \varphi_i)\\
 &&+\int_{\Omega}a_i'(u_i)|\nabla u_i|^{q_i}\partial_{x_j}u_i\, \varphi_i+q_i\int_{\Omega}a_i(u_i)|\nabla u_i|^{q_i-2}(\nabla u_i,\nabla \partial_{x_j}u_i)\varphi_i
 \\
 &&-\int_{\Omega}\sum_{k=1}^m\frac{\partial f_i}{\partial u_k}(u_1,\ldots,u_i,\ldots,u_m)\partial_{x_j}u_k\,\varphi_i,
 \end{eqnarray*}
for any $\varphi_1,\ldots, \varphi_m \in C^1_0 (\Omega)$.
Moreover, using the regularity results contained in Theorem~\ref{local1} (see \cite{M}), the following equation holds
\begin{equation}\label{zxcLI:VISY}
L_{(u_1,\ldots,u_m)}\big ((\partial_{x_j} u_1,\ldots,\partial_{x_j} u_i,\ldots, \partial_{x_j} u_m), (\varphi_1,\ldots, \varphi_m)\big) =0,
\end{equation}
for all $(\varphi_1,\ldots,\varphi_i,\ldots,\varphi_m)$ in $H^{1,2}_{0, \rho_{u_1}}(\Omega)\times\ldots H^{1,2}_{0, \rho_{u_i}}(\Omega)\times \ldots H^{1,2}_{0, \rho_{u_m}}(\Omega)$
where
$$
\rho_{u_i}(x):= |\n u_i(x)|^{p_i-2},\qquad i=1,\ldots,m.
$$
Since $f_i$  are locally $\mathcal{C}^1$ functions and $\|u_i\|_{L^{\infty}(\Omega)}\leq C$ for any $i\in \{1,\ldots, m\}$, there exists a
positive constant $\Theta$ such that
\begin{equation}\label{eq:find1}
\frac{\partial f_i}{\partial u_i}+\Theta\geq 0\,\,\text{for all $u_1,u_2,\dots,u_m>0$}.
\end{equation}
Moreover, in light of \eqref{coop}
we have  
\begin{equation}\label{eq:find2}
\frac{\partial f_i}{\partial u_k}(u_1,\ldots,u_i,\ldots,u_m)\geq 0 \end{equation}
for $i\neq k$.
Therefore, using \eqref{eq:find1} and \eqref{eq:find2} and taking into account \eqref{zxcLI:VISY}, it follows, for all $j=1,\ldots,N$ and for all $i=1,\ldots,m$, that $\partial_{x_j} u_i$ 
are nonnegative functions solving the  inequalities
\begin{eqnarray*} &&\int_{\Omega}|\nabla u_i|^{p_i-2}(\nabla \partial_{x_j}u_i,\nabla \varphi_i)
 +(p_i-2)\int_{\Omega}|\nabla u_i|^{p_i-4}(\nabla u_i,\nabla \partial_{x_j}u_i)(\nabla u_i,\nabla
 \varphi_i)\\
 &&+\int_{\Omega}a_i'(u_i)|\nabla u_i|^{q_i}\partial_{x_j}u_i\, \varphi_i+q_i\int_{\Omega}a_i(u_i)|\nabla u_i|^{q_i-2}(\nabla u_i,\nabla \partial_{x_j}u_i)\varphi_i
 \\
 &&+\Theta\int_{\Omega} \partial_{x_j}u_i \, \varphi_i\geq 0
 \end{eqnarray*}
 for all nonnegative test functions $\varphi_i\geq 0$.
 
Therefore,  we can apply~\cite[Theorem 3.1]{M} to each
$\partial_{x_j} u_i$  separately obtaining that,
for every $s>1$ sufficiently close to $1$ and some positive $\delta$ sufficiently small,
there exists a positive constant $C$ such that
\begin{equation}
    \label{CLAIMIIwINTY}
\|\partial_{x_j} u_i\|_{L^s (B(x, 2\delta))}\leq C_1\inf_{B(x, \delta)}\partial_{x_j} u_i.
\end{equation}
Then the sets $\{x\in\Omega': \partial_{x_j} u_i=0\}$  are
both closed (by continuity) and open (via inequalitity~\eqref{CLAIMIIwINTY})
in the domain $\Omega'$. This yields the assertion.
\end{proof}
\begin{remark}
\label{rem:SCP}
We point out that Theorem  \ref{SCP} holds without any a priori assumption on the critical set of the solution $(u_1, u_2,\ldots,u_m)$, that is, the set where the gradients $\nabla u_i$ vanish. On the other hand, though, condition \eqref{eq:p>trh} can be removed when we work in connected domain $\Omega'$ such that $\nabla u_i\neq 0$ for all $x\in \Omega'$  and for all $i\in \{1,\ldots, m\}$. Indeed, the statements $(1)$ and $(2)$ of Theorem \ref{SCP} hold in the whole range $p_i>1$. 
\end{remark}
\noindent Note that the positivity of $f(x,\cdot)$, is actually needed to obtain  \eqref{drdrdbisssetebissete}. Furthermore,  by~\eqref{drdrdbisssetebissete} it follows that
the critical set $\{x\in \Omega \, : \,\nabla u(x)=0\}$ has zero Lebesgue measure.\\

\noindent An essential tool in the proof of Theorem \ref{main1} is the  Proposition \ref{pro:confr} below, i.e. a weak comparison principle in small domains. To prove it, we start giving the following assumptions:

\

\begin{itemize}
\item [$(\ast)$] We suppose that  $(u_1, u_2,\ldots,u_m) \in C^{1}(\overline{\Omega}_1)\times C^{1}(\overline{\Omega}_1)\ldots\times C^{1}(\overline{\Omega}_1)$ is a solution to \eqref{NVsystem} in the smooth bounded domain $\Omega_1\subset \mathbb R^N$ and $(\tilde u_1, \tilde u_2,\ldots,\tilde u_m) \in C^{1}(\overline{\Omega}_2)\times C^{1}(\overline{\Omega}_2)\ldots\times C^{1}(\overline{\Omega}_2)$ is a solution to \eqref{NVsystem} in the smooth bounded domain $\Omega_2 \subset \mathbb R^N$, with $$\Omega_1\cap\Omega_2\neq \emptyset.$$ 
\end{itemize}

\

\begin{proposition}
    \label{pro:confr}
Assume that $(\ast)$ holds,  $p_i>1$, $q_i=\max\{1,{p_i}-1\}$ for every $i\in \{1,2,\ldots, m\}$ and  let  $\Omega\subset\Omega_1\cap\Omega_2$ be  a  connected set.   Then, there exists a positive number $\d$, depending upon $m, p_i,q_i,a_i, f_i,\|u_i\|_{L^\infty(\Omega)},\|\nabla u_i\|_{L^{\infty}(\Omega)}, \|\nabla \wt{u}_i\|_{L^{\infty}(\Omega)}$,  $i=1,2, \ldots, m$, such that if $\O_0\subset\Omega$ with
$$
\mathcal{L}(\O_0) \leq \d\quad \text{and} \quad u_i\leq \tilde u_i\,\,\text{on $\pa\O_0$ for every $i\in \{1,\ldots,m\}$,}
$$
 then
$$
u_i\leq\tilde u_i\,\,\text{in $\O_0$,}
$$
for every $i\in \{1,\ldots,m\}$.
\end{proposition}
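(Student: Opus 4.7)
The plan is to use the classical energy method: test the subtracted weak equations with $\varphi_i := (u_i - \tilde u_i)^+$ and absorb all lower-order contributions into the $p$-Laplacian coercivity using the smallness of $\mathcal{L}(\Omega_0)$. Since $u_i \le \tilde u_i$ on $\partial \Omega_0$, each $\varphi_i$ lies in $H^1_0(\Omega_0)$ and is admissible in the weak formulations for $u_i$ on $\Omega_1$ and for $\tilde u_i$ on $\Omega_2$. Denoting $\Omega_0^+(i) := \{x \in \Omega_0 : u_i(x) > \tilde u_i(x)\}$, subtraction of the two weak formulations yields, for each $i$,
\begin{equation*}
\int_{\Omega_0^+(i)}\bigl(|\nabla u_i|^{p_i-2}\nabla u_i - |\nabla \tilde u_i|^{p_i-2}\nabla \tilde u_i\bigr)\cdot \nabla \varphi_i \, dx \;=\; \mathcal{F}_i - \mathcal{A}_i,
\end{equation*}
where $\mathcal{F}_i$ collects the contribution of $(f_i(u)-f_i(\tilde u))\varphi_i$ and $\mathcal{A}_i$ collects $(a_i(u_i)|\nabla u_i|^{q_i} - a_i(\tilde u_i)|\nabla \tilde u_i|^{q_i})\varphi_i$.

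The first inequality in \eqref{tipical} bounds the left-hand side from below by $C_1 \int (|\nabla u_i|+|\nabla \tilde u_i|)^{p_i-2}|\nabla \varphi_i|^2 \,dx$. For $\mathcal{A}_i$, Lipschitz continuity of $a_i$, the elementary bound $\bigl||\mu|^{q_i}-|\mu'|^{q_i}\bigr|\le C(|\mu|+|\mu'|)^{q_i-1}|\mu-\mu'|$, and the uniform bounds on $\|\nabla u_i\|_\infty, \|\nabla \tilde u_i\|_\infty$ give a pointwise estimate involving $\varphi_i$ and $|\nabla \varphi_i|$; a weighted Cauchy--Schwarz followed by Young's inequality absorbs the gradient piece into the coercive term at the cost of a $\|\varphi_i\|_{L^2(\Omega_0)}^2$ contribution. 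For $\mathcal{F}_i$, I would write $f_i(u)-f_i(\tilde u) = \sum_k c_{ik}(x)(u_k-\tilde u_k)$ with $c_{ik}(x) = \int_0^1 \partial_{t_k} f_i(\tilde u + s(u-\tilde u))\,ds$, uniformly bounded by the $C^1_{\rm loc}$ hypothesis on $f_i$ and the $L^\infty$ bound on the $u_i$'s. The cooperativity condition \eqref{coop} ensures $c_{ik}\ge 0$ for $k\ne i$, so that $c_{ik}(u_k-\tilde u_k)\varphi_i \le c_{ik}\varphi_k\varphi_i$ pointwise. Young's inequality $\varphi_k\varphi_i \le \tfrac12(\varphi_k^2+\varphi_i^2)$ then gives $\mathcal{F}_i \le K\sum_k \|\varphi_k\|_{L^2(\Omega_0)}^2$.

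Summing over $i$ yields, with constants $C_1, K'$ depending only on the parameters listed in the statement,
\begin{equation*}
\frac{C_1}{2}\sum_i \int_{\Omega_0}(|\nabla u_i|+|\nabla \tilde u_i|)^{p_i-2}|\nabla \varphi_i|^2\,dx \;\le\; K' \sum_i \|\varphi_i\|_{L^2(\Omega_0)}^2.
\end{equation*}
It remains to control each $\|\varphi_i\|_{L^2}^2$ by the $i$-th weighted gradient integral with a prefactor vanishing as $\mathcal{L}(\Omega_0)\to 0$. If $p_i\ge 2$, the inequality $(|\nabla u_i|+|\nabla \tilde u_i|)^{p_i-2}\ge |\nabla u_i|^{p_i-2}$ combined with the weighted Poincar\'e--Sobolev inequality of Theorem \ref{bvbdvvbidvldjbvlb} provides exactly such a bound. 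If $1<p_i<2$, then $(|\nabla u_i|+|\nabla \tilde u_i|)^{p_i-2}\ge (\|\nabla u_i\|_\infty+\|\nabla \tilde u_i\|_\infty)^{p_i-2}$ almost everywhere (the critical set has zero measure by \eqref{drdrdbisssetebissete}), and the classical $L^2$-Poincar\'e inequality on $\Omega_0$, whose constant vanishes with $\mathcal{L}(\Omega_0)$, suffices. Choosing $\delta$ small enough that the resulting prefactor exceeds $K'$ for $\mathcal{L}(\Omega_0)\le\delta$ forces $\varphi_i\equiv 0$ for every $i$, i.e.\ $u_i\le \tilde u_i$ in $\Omega_0$. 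The main technical obstacle is handling the drift term $\mathcal{A}_i$ uniformly across the regimes $p_i\gtrless 2$, because the pointwise inequality for $||\mu|^{q_i}-|\mu'|^{q_i}|$ and the absorption step have different shapes in each case and must remain compatible with the respective (weighted or unweighted) Poincar\'e estimate used at the end.
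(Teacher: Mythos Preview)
Your proposal is correct and follows essentially the same route as the paper's proof: test with $(u_i-\tilde u_i)^+$, use \eqref{tipical} for coercivity, split the drift term via Lipschitz continuity of $a_i$ plus the mean-value bound on $||\mu|^{q_i}-|\mu'|^{q_i}|$ and absorb with a weighted Young inequality, handle the $f_i$ difference via cooperativity to reduce to $\sum_j\|\varphi_j\|_{L^2}^2$, and close with the weighted Poincar\'e inequality for $p_i\ge 2$ and the classical one for $1<p_i<2$. The only cosmetic difference is that the paper decomposes $f_i(u)-f_i(\tilde u)$ by a telescoping sum rather than your integral mean-value representation $c_{ik}=\int_0^1 \partial_{t_k}f_i(\tilde u+s(u-\tilde u))\,ds$; both yield the same estimate.
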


\begin{proof}
Let us set
$$
U_i=(u_i- \tu_i)^+.
$$
We will prove the result by showing that 
$$(u_i-\tu_i)^+\equiv 0,$$
for every $i\in \{1,2, \ldots, m\}$. Since $u_i \leq\tilde{u_i}$ on $\pa\O_0$, then
the functions $(u_i-\tu_i)^+$ belong to $W^{1,p_i}_0(\O_0)$. Therefore,  since $u_i, \tilde u_i$ are both  weak solutions to \eqref{NVsystem} in $\Omega$,  for all $\vp \in C^\infty_c(\O)$  we have
\begin{align}
    \label{eq:wu}
& \int_\O|\n u_i|^{p_i-2}(\n u_i,\n \vp) dx+\int_\O a_i(u_i)|\n u_i|^{q_i}\vp dx= \int_\O f_i(u_1,u_2,\ldots,u_m) \vp dx\\\nonumber
&\text{and}
    \\\label{eq:wtu}
& \int_\O|\n \wt{u}_i|^{p_i-2}(\n \wt{u}_i,\n \vp) dx+\int_\O a_i(\wt{u}_i)|\n \wt{u}_i|^{q_i}\vp dx= \int_\O
f_i(\wt{u}_1,\wt{u}_2,\ldots,\wt{u}_m) \vp dx,
\end{align}
for $i=1,2, \ldots, m$. 
By a density argument, we can put respectively $\vp=(u_i-\tu_i)^+$ in
equations~\eqref{eq:wu} and~\eqref{eq:wtu}. Subtracting, we get for any $i$
\begin{align}
    \label{eq:sub1}
& \int_{\O_0}\big( |\n u_i|^{p_i-2}\n u_i -|\n\wt{u}_i|^{p_i-2}\n \wt{u}_i,
\n (u_i-\tu_i)^+ \big)	\, dx\\ \nonumber
&+\int_{\O_0}\big(a_i(u_i)|\n u_i|^{q_i}-a_i(\wt{u}_i)|\n \wt{u}_i|^{q_i} \big) (u_i-\tu_i)^+ \, dx \\ \nonumber
& =\int_{\O_0}[f_i(u_1,u_2,\ldots,u_m)-f_i(\wt{u}_1,\wt{u}_2,\ldots,\wt{u}_m)](u_i-\tu_i)^+ \, dx.
\end{align}

\noindent The second term on the left hand side of \eqref{eq:sub1} can be estimated as follows
\begin{equation*}
\begin{split}
&\left\lvert \int_{\O_0} \big(a_i(u_i)|\n  u_i|^{q_i}-a_i(\wt{u}_i)|\n \wt{u}_i|^{q_i}\big)(u_i-\tu_i)^+ \, dx \right\lvert \\
&=\left\lvert\int_{\O_0}\big(a_i(u_i)|\n u_i|^{q_i}-a_i(u_i)|\n \wt{u}_i|^{q_i}+a_i(u_i)|\n \wt{u}_i|^{q_i}-a_i(\wt{u}_i)|\n \wt{u}_i|^{q_i}\big)(u_i-\tu_i)^+ \, dx\right\lvert\\
&\leq\int_{\O_0}|a_i(u_i)|\big||\n u_i|^{q_i}-|\n \wt{u}_i|^{q_i}\big| (u_i-\tu_i)^+ \, dx +\int_{\O_0}|\n\wt{u}_i|^{q_i}(a_i(u_i)-a_i(\wt{u}_i)) (u_i-\tu_i)^+ \, dx.
\end{split}
\end{equation*}
\noindent
Since $a_i$ is a locally Lipschitz continuous function (see $({hp^*})$),  it follows that    there exists a  positive constant $K_{a_i}=K_{a_i}(\|u_i\|_{L^{\infty}(\Omega)})$ such that for every $ u_i \in [0,\|u_i\|_{L^{\infty}(\Omega)}]$
\begin{equation*}
 \vert a_i(u_i) \vert \le K_{a_i}.\end{equation*}
Moreover denoting by 
$L_{a_i}=L_{a_i}(\|u_i\|_{L^{\infty}(\Omega)})$ the Lipschitz constant of $a_i$, 
 we obtain
\begin{equation}\label{eq:rayplay1}
\begin{split}
&\left\lvert \int_{\O_0}\big(a_i(u_i)|\n u_i|^{q_i}-a_i(\wt{u}_i)|\n \wt{u}_i|^{q_i}\big) (u_i-\tu_i)^+ \, dx \right\lvert\\ &\leq K_{a_i}\int_{\O_0}\big| |\n u_i|^{q_i}-|\n \wt{u}_i|^{q_i}\big| (u_i-\tu_i)^+ \, dx \\
&+C(q_i, L_{a_i}, \|\n \tilde u_i\|_{L^{\infty}(\Omega)})\int_{\O_0}[(u_i-\tu_i)^+]^2 \,dx.
\end{split}
\end{equation}
By the mean value's theorem and taking into account that $q_i\geq 1$, it follows that 
\begin{equation}\nonumber
\begin{split}
&K_{a_i}\int_{\O_0}\big| |\n u_i|^{q_i}-|\n \wt{u}_i|^{q_i}\big| (u_i-\tu_i)^+ \, dx \\
&\leq C(q_i,K_{a_i})\int_{\O_0}(|\n u_i|+|\n \wt{u}_i|)^{q_{i}-1}|\nabla(u_i-\wt{u}_i)^+| (u_i-\tu_i)^+ \, dx.
\end{split}
\end{equation}
\noindent The last term (recall that $q_i\geq \max\{1,p_i-1\}$) can be written as follows,
\begin{eqnarray}\label{eq:bshahhsowehcaa}
&&C\int_{\O_0}(|\n u_i|+|\n \wt{u}_i|)^{q_{i}-1}|\nabla(u_i-\wt{u}_i)^+| (u_i-\tu_i)^+\, dx\\\nonumber
&&= C\int_{\O_0}\frac{(|\nabla u_i|+|\nabla \wt{u}_i|)^{q_i-1}}{(|\nabla u_i|+|\nabla \wt{u}_i|)^\frac{p_i-2}{2}}(|\nabla u_i|+|\nabla \wt{u}_i|)^\frac{p_i-2}{2}|\nabla (u_i-\tu_i)^+| (u_i-\tu_i)^+\,dx\\\nonumber
&&\leq C\int_{\O_0}(|\nabla u_i|+|\nabla \wt{u}_i|)^\frac{p_i-2}{2}|\nabla (u_i-\tu_i)^+|(u_i-\tu_i)^+ \,dx,
\end{eqnarray}
with $C=C(p_i,q_i,K_{a_i},\|\nabla u_i\|_{L^{\infty}(\Omega)}, \|\nabla \wt{u}_i\|_{L^{\infty}(\Omega)})$ is a positive constant. Exploiting Young's inequality in the right hand side of \eqref{eq:bshahhsowehcaa} we finally obtain 
\begin{eqnarray}\nonumber
&&C\int_{\O_0}(|\n u_i|+|\n \wt{u}_i|)^{q_{i}-1}|\nabla (u_i-\tu_i)^+||(u_i-\tu_i)^+|\,dx\\\nonumber
&&\leqslant \varepsilon C\int_{\O_0}(|\nabla u_i|+|\nabla \wt{u}_i|)^{p_i-2}|\nabla (u_i-\tu_i)^+|^2\,dx  \\\nonumber
&&+\frac{C}{\varepsilon} \int_{\O_0}[(u_i-\tu_i)^+]^2\,dx.
\end{eqnarray}
Therefore, collecting the previous estimates, from \eqref{eq:rayplay1}, we obtain
\begin{equation} \label{eq:rayplay2}
\begin{split}
&\left\lvert \int_{\O_0} \big(a_i(u_i)|\n  u_i|^{q_i}-a_i(\wt{u}_i)|\n \wt{u}_i|^{q_i}\big)(u_i-\tu_i)^+ \, dx \right\lvert \\
&\leqslant \varepsilon C\int_{\O_0}(|\nabla u_i|+|\nabla \wt{u}_i|)^{p_i-2}|\nabla (u_i-\tu_i)^+|^2\,dx  \\\nonumber
&+\frac{C}{\varepsilon} \int_{\O_0}[(u_i-\tu_i)^+]^2\,dx.
\end{split}
\end{equation}
\noindent Finally, using \eqref{tipical} and fixing  $\varepsilon$ sufficiently small, from \eqref{eq:sub1} we get
\begin{equation}\label{eq:sub1second}
\begin{split}
&\int_{\O_0}(|\nabla u_i|+|\nabla \wt{u}_i|)^{p_i-2}|\nabla (u_i-\tu_i)^+|^2\,dx\\   
&\leq  \int_{\O_0}\big( |\n u_i|^{p_i-2}\n u_i -|\n\wt{u}_i|^{p_i-2}\n \wt{u}_i,
\n (u_i-\tu_i)^+ \big)	\, dx\\ 
& \leq C\int_{\O_0}[f_i(u_1,u_2,\ldots,u_m)-f_i(\wt{u}_1,\wt{u}_2,\ldots,\wt{u}_m)](u_i-\tu_i)^+ \, dx +C \int_{\O_0}[(u_i-\tu_i)^+]^2\,dx,
\end{split}
\end{equation}
where $C=C(p_i,q_i,K_{a_i}, L_{a_i},\|\nabla u_i\|_{L^{\infty}(\Omega)}, \|\nabla \wt{u}_i\|_{L^{\infty}(\Omega)})$ is a positive constant. 
\noindent The first term on the right hand side of \eqref{eq:sub1second} can be arranged  as follows
\begin{equation} \label{lipf}
\begin{split}
&\int_{\O_0}[f_i(u_1,u_2,\ldots,u_m)-f_i(\wt{u}_1,\wt{u}_2,\ldots,\wt{u}_m)] (u_i-\tu_i)^+ \, dx\\
=&\int_{\O_0}[f_i(u_1,u_2,\ldots,u_m)-f_i(\wt{u}_1,u_2,\ldots,u_m)+f_i(\wt{u}_1,u_2,\ldots,u_m)
\\&-f_i(\wt{u}_1,\wt{u}_2,\ldots,\wt{u}_m)] (u_i-\tu_i)^+ \,dx\\
=&\int_{\O_0}[f_i(u_1, u_2,\ldots,u_m)-f_i(\wt{u}_1,u_2,\ldots,u_m) +f_i(\wt{u}_1,u_2,\ldots,u_m)-f_i(\wt{u}_1,\wt{u}_2,\ldots,u_m)\\
&+\ldots+ f_i(\wt{u}_1,\wt{u}_2,\ldots, u_i, \ldots,u_m)-f_i(\wt{u}_1,\wt{u}_2,\ldots, \tilde u_i, \ldots,u_m) +f_i(\wt{u}_1,\wt{u}_2,\ldots, \tilde u_i, \ldots,u_m)  \\
\vdots\\
&\ldots +f_i(\wt{u}_1,\wt{u}_2, \ldots,{u}_m)-f_i(\wt{u}_1,\wt{u}_2,\ldots,\wt{u}_m)] (u_i-\tu_i)^+ \,dx.
\end{split}
\end{equation}
Using the fact that $f_i$ are $\mathcal{C}^1_{loc}$ functions  satisfying  \eqref{coop}, see $({hp^*})$, by \eqref{lipf} we have
\begin{equation} \label{finalf}
\begin{split}
&\int_{\O_0}[f_i(u_1,u_2,...,u_m)-f_i(\wt{u}_1,\wt{u}_2,...,\wt{u}_m)] (u_i-\tu_i)^+ \, dx  
\\
&\leq \int_{\Omega_0}\frac{f_i(u_1,u_2,\ldots,u_m)-f_i(\wt{u}_1,u_2,\ldots,u_m)}{(u_1-\tu_1)^+}(u_1-\tu_1)^+(u_i-\tu_i)^+\, dx
\\
&+ \int_{\Omega_0}\frac{f_i(\tilde u_1,u_2,\ldots,u_m)-f_i(\wt{u}_1,\tilde u_2,\ldots,u_m)}{(u_2-\tu_2)^+}(u_2-\tu_2)^+(u_i-\tu_i)^+\, dx
\\
\vdots
\\
&+ \int_{\Omega_0}\frac{f_i(\tilde u_1,\tilde u_2,\ldots, u_i, \ldots, u_m)-f_i(\wt{u}_1,\tilde u_2,\ldots,\tilde u_i, \ldots, u_m)}{(u_i-\tu_i)}[(u_i-\tu_i)^+]^2\, dx
\\
\vdots
\\
&+\int_{\Omega_0}\frac{f_i(\wt{u}_1,\wt{u}_2, \wt{u}_3\ldots,{u}_m)-f_i(\wt{u}_1,\wt{u}_2,\ldots,\wt{u}_m)}{(u_m-\tilde u_m)^+}](u_m-\tilde u_m)^+ (u_i-\tu_i)^+ \,dx
\\
&\leq L_{f_i} \sum_{j=1}^{m}\int_{\O_0} (u_j-\tu_j)^+(u_i-\tu_i)^+ \, dx,
\end{split}
\end{equation}
where $ L_{f_i}$ is  the Lipschitz constant of $f_i$ that depends on  the
$\displaystyle \max_{1\leq j\leq m }\{\|u_j\|_{L^\infty(\Omega)}\}
$.  Exploiting Young's inequality on the right hand side of \eqref{finalf}, we get
\begin{equation}\label{eq:rayplay}
\int_{\O_0}[f_i(u_1,u_2,...,u_m)-f_i(\wt{u}_1,\wt{u}_2,...,\wt{u}_m)] (u_i-\tu_i)^+ \, dx\leq C \sum_{j=1}^m\int_{\O_0} [(u_j-\tu_j)^+]^2 \, dx,
\end{equation}
where $\displaystyle C=C\big (m, L_{f_i}\big)$ is a positive constant. Finally, from \eqref{eq:sub1second} and \eqref{eq:rayplay} we infer for $i=1,\ldots, m$
\begin{equation}\label{eq:sub222}
\int_{\O_0} (|\n u_i|+|\n \wt{u}_i|)^{p_i-2}|\n (u_i-\tu_i)^+|^2 dx\leq C_i \sum_{j=1}^m \int_{\O_0} [(u_j-\tu_j)^+]^2 \, dx,
\end{equation}
where $\displaystyle C_i=C_i(m,p_i,q_i,K_{a_i},L_{a_i}, L_{f_i},\|\nabla u_i\|_{L^{\infty}(\Omega)}, \|\nabla \wt{u}_i\|_{L^{\infty}(\Omega)})$ is a positive constant.

In the case $p_j\geq 2$, a weighted Poincar\'{e} inequality holds true on  the right hand side of \eqref{eq:sub222}, see Theorem \ref{bvbdvvbidvldjbvlb}. Indeed,  equation \eqref{Sobolev} yields
\begin{equation}\label{eq:pu}
\int_{\O_0} [(u_j-\tu_j)^+]^2dx\leq C_{P, j}(\O_0)\int_{\O_0} (|\n
u_j|+|\n \tilde u_j|)^{p_j-2}|\n (u_j-\tu_j)^+|^2dx,\quad \text{if } p_j \geq 2,
\end{equation}
where  the Poincar\'e constant  $C_{P, j}(\O_0)\to 0$, when the Lebesgue measure $\mathcal{L}(\O_0)\to 0$.  Actually, we used the fact that, since $p_j\geq 2$,
\[|\n
u_j|^{p_j-2}\leq (|\n
u_j|+|\n \tilde u_j|)^{p_j-2}.\]
In the case $p_j <2$, we use the standard Poincar\'{e} inequality on the right hand side of \eqref{eq:sub222}, namely
\begin{equation*}
\int_{\O_0}[(u_j-\tu_j)^+]^2 \, dx\leq 
C_{P, j}(\O_0)\int_{\O_0} |\n (u_j-\tu_j)^+|^2 \, dx,\quad \text{if } p_j < 2,
\end{equation*}
and $C_{P, j}(\O_0)\to 0$ if  $\mathcal{L}(\O_0)\to 0$.
Moreover, in the case $p_j< 2$  since $u_j, \tilde u_j \in C^1(\overline\Omega)$, 
we deduce also  
\begin{eqnarray}\label{eq:martin}
&&\int_{\O_0} |\n (u_j-\tu_j)^+|^2 dx \\\nonumber
&&\leq C(p_j, \|\nabla u_j\|_{L^{\infty}(\Omega)}, \|\nabla \wt{u}_j\|_{L^{\infty}(\Omega)})\int_{\O_0} (|\n u_j|+|\n \wt{u}_j|)^{p_j-2}|\n (u_j-\tu_j)^+|^2 dx. \end{eqnarray}
Using \eqref{eq:martin}, up to redefine the Poincar\'e constant in this case,  we obtain
\begin{equation}\label{eq:pu1}
\int_{\O_0}[(u_j-\tu_j)^+]^2 \, dx\leq 
C_{P, j}(\O_0)\int_{\O_0} (|\n u_j|+|\n \wt{u}_j|)^{p_j-2} |\n (u_j-\tu_j)^+|^2 \, dx,\quad \text{if } p_j < 2,
\end{equation}
and $C_{P, j}(\O_0)\to 0$ if  $\mathcal{L}(\O_0)\to 0$.
Let us set now
\begin{equation}\label{eq:pu11}
C_{P}(\O_0)=\max_{1\leq j\leq m}\{C_{P, j}(\O_0)\}.
\end{equation}
Furthermore, by combining \eqref{eq:sub222} with \eqref{eq:pu}, \eqref{eq:pu1} and \eqref{eq:pu11}, we obtain for $i=1,\ldots,m$
\begin{eqnarray}\label{eq:martin1}
&&\int_{\O_0} (|\n u_i|+|\n \wt{u}_i|)^{p_i-2}|\n (u_i-\tu_i)^+|^2 dx\\\nonumber
&&\leq
C_iC_{P}(\Omega_0)\sum_{j=1}^m\int_{\O_0} (|\n u_j|+|\n \wt{u}_j|)^{p_j-2}|\n (u_j-\tu_j)^+|^2 dx.
\end{eqnarray}
Let us define $\displaystyle \hat C= m\cdot \max_{1\leq i\leq m} \{C_i\}$.
By adding equations  \eqref{eq:martin1} and setting 
\[
I(\O_0)=\sum_{i=1}^m\int_{\O_0} (|\n u_i|+|\n \wt{u}_i|)^{p_i-2}|\n (u_i-\tu_i)^+|^2 dx,
\]
we obtain
\begin{equation}\label{eq:martin2}
I(\O_0)\leq \hat CC_{P}(\Omega_0) I(\O_0).
\end{equation}
Now, we choose $\d>0$ sufficiently small such that the condition $\mathcal L(\O_0)\leq\d$ implies
$$\hat CC_{P}(\Omega_0)<1.$$ Therefore, from \eqref{eq:martin2} we get the desired contradiction, namely  
\[
U_i=(u_i-\wt{u}_i)^+ \equiv 0,
\]
for all $i=1,\ldots,m.$
\end{proof}
\section {Simmetry results for solutions to \eqref{NVsystem}: Proof of Theorem \ref{main1}}\label{sec:main}
\noindent In this section we prove our main result. As we said in the introduction, without loss of generality and for the sake of simplicity, since the problem is invariant with respect to translations, reflections and rotations,  we suppose that $\Omega$ is a  bounded smooth  domain which is convex in the $x_1$-direction and symmetric with respect to $\{x_1=0\}$. 
Let us now recall the main ingredients of the moving plane method.     We set
$$
T_\lambda:=\{x \in \R^N: x_1= \lambda \}.
$$
Given $x\in \R^N$ and $\lambda <0$, we define
\[
x_\lambda=R_\lambda(x):= (2\lambda-x_1, x_2,\ldots,x_N)
\]
and the reflected functions 
\[
u_{i,{\lambda}}(x):=u_i(x_\lambda),\qquad i=1,2,\ldots, m.
\]
We also set 
\begin{equation}\nonumber
\Omega_\lambda:=\{x\in \Omega\,:\, x_1<\lambda\},
\end{equation}
\begin{equation}\label{eq:aainf}
a:=\inf_{x\in \O} x_1,
\end{equation}
\begin{equation}\label{eq:capitalLambda}
\Lambda :=\Big\{a<\lambda<0 \,\, : u_i\leq u_{i,t}\,\, \text{in}\,\, \Omega_t, \,\,\text{for all}\,\, t\in(a,\lambda] \,\, \text{and}\,\,  \text{for all}\,\,i=1,2,\ldots, m \Big \}
\end{equation}
and (if $\Lambda\neq \emptyset$)
\begin{equation}\nonumber
\bar \lambda=\sup\Lambda.
\end{equation}
Finally, for $i=1,\ldots,m$, we define the {\em critical sets} \begin{equation*}
Z_{u_i} := \{x\in \Omega: \n u_i(x)=0\}.
\end{equation*}

\begin{proof}[Proof of Theorem \ref{main1}]
For $a<\lambda<0$ (see \eqref{eq:aainf}) and $\lambda$ sufficiently close to $a$, we  assume that
$\mathcal{L}(\O_\lambda)$ is as small as we need. In particular, we may assume that
Proposition~\ref{pro:confr} works with $\Omega_1=\Omega, \Omega_2= R_\lambda (\Omega),\Omega_0=\Omega_\lambda$ and $\tilde u_i=u_{i,\lambda}$. Therefore, we set
\[
W_{i,\lambda} :=u_i-u_{i,\lambda},\quad  i=1,2,\ldots, m
\]
and we observe that, by construction, we have
\begin{equation*}
W_{i,\lambda}\leq 0\,\,\text{on $\partial\O_\lambda$},\quad  i=1,2,\ldots, m.
\end{equation*}
By Proposition~\ref{pro:confr}, it follows that
\begin{equation*}
W_{i,\lambda} \leq 0\,\,\text{in $\O_\lambda$}, \quad  i=1,2,\ldots, m.
\end{equation*}
Hence, the  set  $\Lambda$ (see \eqref{eq:capitalLambda}) is not empty 
and $\bar \lambda \in (a,0]$.
Note that, by continuity, it follows $u_i\leq u_{i,\bar \lambda}$.
We have to show that, actually $\bar\lambda =0$. Hence, we assume by contradiction that $\bar \lambda < 0$ and we argue as follows. 

First of all, we  point out that  $\mathcal{L}(Z_{u_i})=0$ for all $i$.  Indeed, if we apply  Theorem \ref{local1}, for $u_i$ with  $f(x,u_i)=f_i(u_1,u_2,\ldots,u_i,\ldots, u_m)$,  from~\eqref{drdrdbisssetebissete} the conclusion follows. Hence, let  $A$ be an open set such that for $i=1,\ldots,m$
$$ Z_{u_i}\cap\Omega_{\bar\lambda} \subset A \subset\Omega_{\bar\lambda},$$
with  the Lebesgue measure $\mathcal{L}(A)$ small as we like.
Notice now that, since $f_i$  are locally $\mathcal{C}^1$ functions and $\|u_i\|_{L^{\infty}(\Omega)}\leq C$ for any $i\in \{1,\ldots, m\}$, there exists a
positive constant $\Theta$ such that
\begin{equation}\label{pranzo}
\frac{\partial f_i}{\partial u_i}+\Theta\geq 0\,\,\text{for all $u_1,u_2,\dots,u_m>0$}.
\end{equation}
Furthermore, using  \eqref{coop} we obtain 
\begin{eqnarray}\label{eq:confr}
&&-\Delta_{p_i}  u_i +a_i(u_i)|\nabla u_i|^{q_i}+ \Theta u_i  =f_i (u_1,u_2,\ldots,u_m)+ \Theta u_i 
\\\nonumber
&&\leq  f_i (u_{1,\lambda},u_{2,\lambda},\ldots,u_{m,\lambda})+\Theta u_{i,\lambda}
= -\Delta_{p_i}  u_{i,\lambda}  +a_i(u_{i,\lambda})|\nabla u_{i,\lambda}|^{q_i}+ \Theta u_{i,\lambda}
\end{eqnarray}
for any  $a< \lambda\leq \bar\lambda$.  In light
of~\eqref{eq:confr} we have\begin{equation}\label{eq:sist1}
\begin{cases}
-\Delta_{p_i}  u_i +a_i(u_i)|\nabla u_i|^{q_i}+ \Theta u_i
\leq -\Delta_{p_i}  u_{i,\lambda}  +a_i(u_{i,\lambda})|\nabla u_{i,\lambda}|^{q_i}+ \Theta u_{i,\lambda} & \text{in $\O_\lambda$},\\
u_i \leq u_{i,\lambda} & \text{in $\O_\lambda$}.
\end{cases}
\end{equation}
Then, by~\eqref{eq:sist1} and the strong comparison principle, see statement $(1)$ of  Theorem \ref{SCP}, for any $i=1,2,\ldots,m$ such that  $p_i\geq 2$, we have
\[
u_i < u_{i,\bar\lambda} \qquad\text{or}\qquad u_i \equiv u_{i,\bar\lambda} ,
\]
in  $\Omega_{\bar\lambda}$. 

\

\noindent In the case $1<p_i<2$,    we    prove first the following

\

\begin{center}
{\sc Claim:} The case $u_i\equiv u_{i,\bar\lambda}$ in some  connected component $\mathcal{C}$  of $\Omega_{\bar\lambda}\setminus Z_{u_i}$, such that $\overline{\mathcal C}\subset \Omega$, is not possible.
\end{center}

\

\noindent We proceed by contradiction. Let us assume that such component exists, namely
$$\mathcal C\subset \Omega \quad\text{such that} \quad  \partial \mathcal C\subset Z_{u_i}.$$
For all $\varepsilon>0$, let us define  $G_\varepsilon:\mathbb{R}^+_0\rightarrow\mathbb{R}$ by setting
\begin{equation}\label{eq:G}
G_\varepsilon(t)=\begin{cases} 0 & \text{if $0\leq t\leq \e$}  \\
2t-2\varepsilon& \text{if $\varepsilon\leq t\leq2\varepsilon$}
\\ t & \text{if  $t\geq 2\varepsilon$}.
\end{cases}
\end{equation}
\noindent Let $\chi_{\mathcal A}$ be the characteristic function of a set $\mathcal A$.
We define
\begin{equation}\label{eq:concettinaanalitica}
\Psi_{\varepsilon}\,:=\,e^{-s_i(u_i)}\frac{G_\varepsilon(|\nabla u_i|)}{|\nabla u_i|}\chi_{(\mathcal {C}\cup \mathcal{C}^\lambda)},
\end{equation}
where $\mathcal{C}^\lambda$ is the reflected set of $\mathcal{C}$ with respect to the hyperplane $T_{\bar\lambda}$ and
\begin{equation}\label{eq:aooooooooo}
s_i(t)=\hat C_i\cdot \int_0^t \, a_i^+(t')dt',
\end{equation}
where $a_i^+:=\max\{0, a_i\}$ ($a_i^-:=-\min\{0, a_i\}$) and $\hat C_i$ denotes  some positive constant to be chosen later. 

We point out that supp$\Psi_\varepsilon \subset \mathcal {C}\cup \mathcal{C}^\lambda$, which implies    $\Psi_\varepsilon \in W^{1,p}_0( \mathcal {C}\cup \mathcal{C}^\lambda)$. Indeed by definition of $\mathcal C$ we have that $\nabla u_i=0$ on $\partial(\mathcal {C}\cup \mathcal{C}^\lambda)$. Moreover using the test function $\Psi_{\varepsilon}$ defined in \eqref{eq:concettinaanalitica}, we are able to integrate on the boundary $\partial(\mathcal {C}\cup \mathcal{C}^\lambda)$ which could be not regular. 

Hence, we obtain
\begin{eqnarray}\label{qqqqqqbissete}
&&\int_{\mathcal C\cup \mathcal{C}^\lambda}\, |\nabla u_i|^{p_i-2} (\nabla u_i ,\nabla\Psi_{\varepsilon}) dx+
\int_{\mathcal C\cup \mathcal{C}^\lambda}a_i^+(u_i)|\nabla u_i|^{q_i}\Psi_{\varepsilon} dx\\ \nonumber
&&=\int_{\mathcal C\cup \mathcal{C}^\lambda}a_i^-(u_i)|\nabla u_i|^{q_i}\Psi_{\varepsilon} dx +\int_{\mathcal C\cup \mathcal{C}^\lambda}f_i(u_1,u_2,...,u_m)\Psi_{\varepsilon} dx.
\end{eqnarray}
It is easy to see that for every $x\in [0, M]$ and for every  
$l,q\geq 1$ and $\sigma>0$, there exists a positive constant $C=C(l,q,\sigma, M)$ such that
 \begin{equation}\label{blelellelelle}
x^{q}\leq C \cdot x^l+\sigma,\quad x\in [0,M].
\end{equation}
Therefore,
\eqref{qqqqqqbissete} and \eqref{blelellelelle} imply:
\begin{eqnarray}\label{qeqeqeeqeqeq}
&&\int_{\mathcal C\cup \mathcal{C}^\lambda}\, |\nabla u_i|^{p_i-2} (\nabla u_i ,\nabla\Psi_\varepsilon) dx+C_i(\sigma_i, p_i,q_i,\|\nabla u_i\|_{L^{\infty}(\Omega)})\int_{\mathcal C\cup \mathcal{C}^\lambda}a_i^+(u_i)|\nabla u_i|^{p_i}\Psi_\varepsilon dx\\\nonumber&&+\sigma_i\int_{\mathcal C\cup \mathcal{C}^\lambda}a_i^+(u_i)\Psi_\varepsilon dx
\\\nonumber &&\geq\int_{\mathcal C\cup \mathcal{C}^\lambda}a_i^-(u_i)|\nabla u_i|^{q_i}\Psi_\varepsilon dx+\int_{\mathcal C\cup \mathcal{C}^\lambda}f_i(u_1,u_2,...,u_m)\Psi_\varepsilon dx\\\nonumber&&\geq \int_{\mathcal C\cup \mathcal{C}^\lambda}f_i(u_1,u_2,...,u_m)\Psi_\varepsilon dx.
\end{eqnarray}
By $({hp^*})-(ii)$, since $\overline{\mathcal C\cup \mathcal{C}^\lambda}\subset \Omega$ we have that there exists $\gamma_i>0$ such that 
\[f_i(u_1,u_2,...,u_m)\geq \gamma_i.\]
Hence, we can choose $\sigma_i$ in \eqref{blelellelelle}, say $\bar \sigma_i$,
small enough such that
$$
\gamma_i-\bar\sigma_i \,\|a_i^+(u_i)\|_{\infty}=\tilde C_i>0\,,
$$
so that
\begin{eqnarray}\label{qeqeqeeqeqeqAAA}
&&\int_{\mathcal C\cup \mathcal{C}^\lambda}\, |\nabla u_i|^{p_i-2} (\nabla u_i ,\nabla\Psi_\varepsilon ) dx+C_i(\bar\sigma_i, p_i,q_i,\|\nabla u_i\|_{L^{\infty}(\Omega)})\int_{\mathcal C\cup \mathcal{C}^\lambda}a_i^+(u_i)|\nabla u_i|^{p_i}\Psi_\varepsilon  dx\\\nonumber
&&\geq\tilde C_i\int_{\mathcal C\cup \mathcal{C}^\lambda}\Psi_\varepsilon dx.
\end{eqnarray}
Choosing $\hat C_i$ in \eqref{eq:aooooooooo} equal to $C_i(\bar\sigma_i, p_i,q_i,\|\nabla u_i\|_{L^{\infty}(\Omega)})$ in \eqref{qeqeqeeqeqeqAAA} we obtain
\begin{eqnarray}\label{VaScOoOoOoO}\\\nonumber
&&\int_{\mathcal C\cup \mathcal{C}^\lambda}e^{-s_i(u_i)}|\nabla u_i|^{p_i-2}  \left(\nabla u_i,  \nabla \frac{G_\varepsilon(|\nabla u_i|)}{|\nabla u_i|}\right)dx\\\nonumber
&&\geq\tilde C_i\int_{\mathcal C\cup \mathcal{C}^\lambda}e^{-s_i(u_i)}\displaystyle \frac{G_\varepsilon(|\nabla u_i|)}{|\nabla u_i|} dx.
\end{eqnarray}
We set ${\displaystyle h_\varepsilon (t)=\frac{G_\varepsilon(t)}{t}}$, meaning that $h_\varepsilon(t)=0$ for $0\leq t\leq \varepsilon$.
We have:
\begin{eqnarray}\label{eq:smm3}
&&\left| \int_{\mathcal C\cup \mathcal{C}^\lambda}e^{-s_i(u_i)}|\nabla u_i|^{p_i-2}\left(\nabla u_i,  \nabla \frac{G_\varepsilon(|\nabla u_i|)}{|\nabla u_i|}\right)dx\right|\\\nonumber &&\leq \int_{\mathcal C\cup \mathcal{C}^\lambda}|\nabla u_i|^{p_i-1}|h_\varepsilon'(|\nabla u_i|)||\nabla (|\nabla u_i|)|dx\\\nonumber
&&\leq C_i\int_{\mathcal C\cup \mathcal{C}^\lambda}|\nabla u_i|^{p_i-2}\Big(|\nabla u_i|h_\varepsilon'(|\nabla u_i|)\Big)\|D^2 u_i\| dx,
\end{eqnarray}
where $\|D^2 u_i\|$ denotes the Hessian  norm and $C_i$ a positive constant.\\

\

\noindent We let $\varepsilon\rightarrow 0$.
To this aim, let us first show  that
\begin{itemize}
	\item [$(i)$] $|\nabla u_i|^{p_i-2}\|D^2 u_i\| \in L^1(\mathcal C\cup \mathcal{C}^\lambda);$
	
	\
	
	\item [$(ii)$] $|\nabla u_i|h_\varepsilon'(|\nabla u_i|)\rightarrow 0$ a.e. in $\mathcal C\cup \mathcal{C}^\lambda$ as $\varepsilon \rightarrow 0$ and $|\nabla u_i|h_\varepsilon'(|\nabla u_i|)\leq C$ with $C$ not depending on $\varepsilon$.
\end{itemize}
Let us  prove $(i)$. By H\"older's inequality it follows
\begin{eqnarray}\label{eq:smm4}
&&\int_{\mathcal C\cup \mathcal{C}^\lambda}|\nabla u_i|^{p_i-2}\|D^2u_i\| dx\leq \sqrt{\mathcal{L}(\mathcal C\cup \mathcal{C}^\lambda)}\left( \int_{\mathcal C\cup \mathcal{C}^\lambda}|\nabla u_i|^{2(p_i-2)}\|D^2u_i\|^2 dx \right)^{\frac 12}\\\nonumber
&\leq&C_i\left( \int_{\mathcal C\cup \mathcal{C}^\lambda}|\nabla u_i|^{p_i-2-\beta_i}\|D^2u_i\|^2|\nabla u_i|^{p_i-2+\beta_i}dx \right )^{\frac 12}\\\nonumber &\leq& C_i \|\nabla u_i\|^{(p_i-2+\beta_i)/2}_{L^{\infty}(\Omega)} \left(\int_{\mathcal C\cup \mathcal{C}^\lambda}|\nabla u_i|^{p_i-2-\beta_i}\|D^2 u_i\|^2 dx \right)^{\frac 12},
\end{eqnarray}
with  $0\leq\beta_i<1$ and $C_i$ a positive constant. 

\noindent Using \eqref{drdrd} of Theorem \ref{local1}, we infer that
$$\left(\int_{\mathcal C\cup \mathcal{C}^\lambda}|\nabla u_i|^{p_i-2-\beta_i}\|D^2 u_i\|^2 dx \right)^{\frac 12}\leq C.$$ Then, by \eqref{eq:smm4} we obtain
$$\int_{\mathcal C\cup \mathcal{C}^\lambda}|\nabla u_i|^{p_i-2}\|D^2u_i\| dx\leq C.$$

\

\noindent Let us prove $(ii)$. Recalling \eqref{eq:G}, we obtain
$$
h'_\varepsilon(t)=
\begin{cases} 0 & \text{if $0< t \leq \e$}  \\
\frac{2\varepsilon}{t^2}& \text{if $\varepsilon< t<2\varepsilon$}
\\ 0 & \text{if  $t\geq  2\varepsilon$} ,
\end{cases}
$$
and, then, $|\nabla u_i|h_\varepsilon'(|\nabla u_i|)$ tends to $0$
almost everywhere in $\mathcal C\cup \mathcal{C}^\lambda$ as $\varepsilon$ goes to $0$ and
$|\nabla u_i|h_\varepsilon'(|\nabla u_i|)\leq 2$.

Finally, by the Lebesgue's dominate convergence theorem,   passing to the limit for $\varepsilon \rightarrow 0$ in \eqref{VaScOoOoOoO} we obtain
\[
0 \geq \tilde C_i\int_{\mathcal C\cup \mathcal{C}^\lambda}e^{-s_i(u_i)}dx >0.
\]
This gives a contradiction, hence  the {\sc Claim} holds. 

Then, using  also  Hopf's boundary lemma  (see \cite[Theorem 5.5.1]{PSB}) for 
\[-\D_{p_i} u_i+a_i(u_i)|\nabla u_i|^{q_i}= f_i(u_1,u_2,\ldots,u_i,\ldots, u_m)\geq0, \]
$u_i> 0$ in $\Omega$ and $u_i= 0$ on $\partial\Omega$, we deduce that the set $\Omega_{\bar \lambda}\setminus Z_{u_i}$ is connected. Indeed, thanks to Hopf's lemma,  $Z_{u_i}$ lies far from the boundary $\partial \Omega$. {Moreover we also  remark that since $\Omega$ is convex in the $x_1$-direction, we have that  the boundary $\partial \Omega$ is connected.} 
Consequently, for any $i=1,2,\ldots,m$ we get
\begin{equation}
    \label{pippopappo1}
u_i < u_{i,\bar\lambda} 
\end{equation}
in $\Omega_{\bar \lambda}\setminus Z_{u_i}$.

Consider now a compact set $K$ in $\Omega_{\bar \lambda}$ such that
${\mathcal L}(\Omega_{\bar\lambda}\setminus K)$ is sufficiently small so that
Proposition~\ref{pro:confr} can be applied. By what we proved before, for any $i\in \{1,\ldots, m\}$, it holds that $u_i < u_{i,\bar\lambda}$ in $K \setminus A$, which is compact. Then, by (uniform) continuity,
we find $\epsilon>0$ such that, $\bar \lambda+\epsilon < 0$ and for
$\bar \lambda<\lambda<\bar \lambda+\epsilon$ we have that ${\mathcal L}(\Omega_{\lambda}\setminus (K\setminus A))$ is
small enough as before, and $u_{i,\lambda}-u_i >0$ in $K \setminus A$ for any $i$. In particular, $u_{i,\lambda}-u_i >0$ on $\partial(K \setminus A)$.
Consequently, $u_i \leq u_{i,\lambda}$ on
$\partial(\Omega_{\lambda}\setminus(K\setminus A))$. By Proposition~\ref{pro:confr} it follows
$u_i \leq u_{i,\lambda}$  in 
$\Omega_{\lambda}\setminus(K\setminus A)$ and,
consequently in $\Omega_{\lambda}$, which contradicts the assumption $\bar\lambda<0$.
Therefore $\bar \lambda =0$ and the thesis is proved.
Finally,~\eqref{eq:akdkjsjkaslarea} follows by the monotonicity of the
solution that is implicit in the moving plane method.
\

Finally, if $\Omega$ is a ball, repeating this argument along any direction, it follows that $u_i$, $i=1,\ldots,m$, are radially symmetric. The fact that
 $\displaystyle \frac{\partial u_i}{\partial r}(r)<0$ for $r\neq 0$, follows by the Hopf's boundary lemma which works in this case since the level sets are balls
 and, therefore, fulfill the interior sphere condition.
 
 Finally \eqref{eq:derivatapositiva} follows by \eqref{eq:akdkjsjkaslarea} using Theorem \ref{SCP}  (see the statement $(2)$) and the Dirichlet boundary condition of \eqref{NVsystem}. 
\end{proof}

\end{document}